\newcommand{\eps}{\varepsilon}
\newcommand{\R}{\mathbb R}
\newcommand{\N}{\mathbb N}
\newcommand{\then}{\Longrightarrow}
\newcommand{\J}{{\cal J}}
\newcommand{\A}{{\cal A}}
\newcommand{\arctg}{{\rm arctg}}
\DeclareMathOperator*{\esssup}{ess\; sup}
\newcommand\meas{{\rm meas}}
\newtheorem{corollary}{Corollary}[section]
\newtheorem{theorem}[corollary]{Theorem}
\newtheorem{lemma}[corollary]{Lemma}
\newtheorem{proposition}[corollary]{Proposition}
\theoremstyle{definition}
\newtheorem{definition}[corollary]{Definition}
\newtheorem{remark}[corollary]{Remark}
\newtheorem{example}[corollary]{Example}
\numberwithin{equation}{section}
\begin{document}

\title{{\bf Quasilinear problems without the Ambrosetti--Rabinowitz condition}}

\author{{\bf\large Anna Maria Candela$^{1}$\footnote{Member of the {\sl Gruppo Nazionale per
l'Analisi Matematica, la Probabilit\`a e le loro Applicazioni} (GNAMPA)
of the {\sl Istituto Nazionale di Alta Matematica} (INdAM)}, Genni Fragnelli$^{2,*}$,
Dimitri Mugnai$^{3,*}$}\\
{\small $^{1,2}$Dipartimento di Matematica}\\
{\small Universit\`a degli Studi di Bari Aldo Moro} \\
{\small Via E. Orabona 4, 70125 Bari, Italy}\\
{\small \it $^1$annamaria.candela@uniba.it, $^2$genni.fragnelli@uniba.it}
\vspace{1mm}\\
{\small $^3$Dipartimento di Scienze Ecologiche e Biologiche}\\
{\small Universit\`a degli Studi della Tuscia}\\
{\small Largo dell'Universit\`a, 01100 Viterbo, Italy}\\
{\small \it dimitri.mugnai@unitus.it}
\vspace{1mm}}
\date{}

\maketitle

\begin{abstract}
We show the existence of nontrivial solutions for a class of
quasilinear problems in which the governing operators 
depend on the unknown function. By using a suitable variational 
setting and a weak version of the Cerami--Palais--Smale condition, 
we establish the desired result without assuming that the nonlinear 
source satisfies the Ambrosetti--Rabinowitz condition.
\end{abstract}

\noindent
{\it \footnotesize 2010 Mathematics Subject Classification}: {\scriptsize 35J92, 35J20, 35J60}.\\
{\it \footnotesize Key words}: {\scriptsize Quasilinear equation, weak Cerami--Palais--Smale condition, 
failure of the Ambrosetti--Rabinowitz condition, $p$--superlinear problem, subcritical growth}.


\section{Introduction} \label{secintroduction}

In this paper we investigate the existence of weak bounded solutions of the  problem 
\begin{equation}\label{euler}
\left\{
\begin{array}{lr}
- {\rm div} (A(x,u) |\nabla u|^{p-2} \nabla u) + \frac1p\ A_t(x,u)
 |\nabla u|^p \  =\  g(x,u) & \hbox{in $\Omega$,}\\
u\ = \ 0 & \hbox{on $\partial\Omega$,}
\end{array}
\right.
\end{equation}
with $\Omega \subset \R^N$ bounded domain, $N\ge 1$,
$p > 1$, where $A(x,t)$, $g(x,t)$ are
given real functions on $\Omega \times \R$ and $A_t(x,t) = \frac{\partial}{\partial t} A(x,t)$.

Due to the fact that the divergence term depends also on the unknown function $u$,
the given equation is  quasilinear and cannot be studied with standard variational techniques.
For this reason, in the last years different approaches have been developed 
involving nonsmooth tools (see \cite{Ca,CD,CDM}) or a suitable definition of critical point, since the 
weak solutions of \eqref{euler} require as test functions 
only elements of $W^{1,p}_0(\Omega)$ which are also in $L^\infty(\Omega)$
(see \cite{AB1}). 

More recently, the idea has been to set
problem \eqref{euler} in a suitable Banach space $X$, namely 
$X= W^{1,p}_0(\Omega)\cap L^\infty(\Omega)$ equipped with the intersection norm
$\|\cdot\|_X = \|\cdot\|_W + |\cdot|_\infty$, so that
its weak solutions coincide with the {\sl true} critical points of the associated functional
\begin{equation}
\label{funct}
\J(u)\ =\ \frac1p \int_\Omega A(x,u)|\nabla u|^p dx - \int_\Omega G(x,u) dx,\qquad u\in X,
\end{equation}
with $G(x,t)= \int_0^tg(x,\tau)d\tau$ (see \cite{CP1,CP2}).

Following such an approach, in this paper we consider suitable assumptions,
in particular those ones introduced in \cite{MP} for a
superlinear  $(p,q)$--equation, which allow us
to prove the existence of at least one nontrivial critical point of $\J$ 
in $X$, i.e., a weak bounded solution of \eqref{euler},
when the nonlinear term $g(x,t)$ is $(p-1)$--superlinear 
but does not satisfies the Ambrosetti--Rabinowitz condition. 

Problem \eqref{euler} with a $(p-1)$--superlinear
term $g(x,t)$ has been already studied if the Ambrosetti--Rabinowitz
condition, or a similar slightly more general assumption, holds 
(see \cite{AB1,CP1,CP2,CPS1,Ca}). Eventually, the term $A(x,t)|\xi|^p$ 
is replaced by some $\A(x,t,\xi)$, but both in \cite{AB1} and \cite{Ca}
it is assumed $A_t(x,t)t \ge 0$ a.e. in $\Omega$ for all $t\in \R$. 
On the contrary, in \cite{CP1,CP2,CPS1} such a product can also change sign
while, here, with the failure of the Ambrosetti--Rabinowitz condition,
we require $A_t(x,t)t \le 0$ (see Remark \ref{rema43}).
 
We note that, in order to find critical points of $\J$ in the intersection space $X$,
we cannot apply the classical Mountain Pass Theorem in \cite{AR}
as our functional $\J$ does not satisfy the Palais--Smale condition,
or its Cerami's
variant, in $X$ (Palais--Smale sequences 
may converge in $W^{1,p}_0(\Omega)$ and be unbounded in $L^\infty(\Omega)$, 
see, e.g., \cite[Example 4.3]{CP2017}). 
Hence, a weaker version of the Cerami--Palais--Smale 
condition is required and we can use a generalized 
version of the Mountain Pass Theorem (see Section \ref{secabstract}).

Since our main theorem covers very general situations
and a list of conditions is needed, we shall give 
the complete framework in Sections \ref{variational} and \ref{secmain}. 
However here, in order to highlight how our approach improves 
previous results, we consider the particular example
\begin{equation}\label{mod0}
A(x,t) \ =\ a(x) - \arctg|t|^{\theta} ,
\end{equation}
so that problem \eqref{euler} reduces to
\begin{equation}\label{euler0}
\left\{
\begin{array}{ll}
- {\rm div} \Big((a(x) -  \arctg |u|^{\theta}) |\nabla u|^{p-2}\nabla u\Big) \ -\dfrac{\theta}{p}
 \displaystyle \frac{|u|^{\theta-2} u}{1 + |u|^{2\theta}}  |\nabla u|^p\
 =\ g(x,u)
  &\hbox{in $\Omega$,}\\
u\ = \ 0 & \hbox{on $\partial\Omega$.}
\end{array}
\right.
\end{equation}

\begin{theorem}\label{coromain}
Let $a\in L^\infty(\Omega)$ be such that 
\[
a(x) \ge a_0 > \frac{\pi}{2}+\frac{\theta}{2p}\quad \hbox{a.e. in $\Omega$} 
\]
and assume that $1 < \theta\leq p$.
If $g(x,t)$ satisfies the assumptions $(G_0)$--$(G_4)$
stated in Sections $\ref{variational}$ and $\ref{secmain}$, 
for example, 
\begin{equation}\label{mod1}
g(x,t)=g_1(t)=\begin{cases} |t|^{q-2}t &\mbox{ if }|t|\leq 1\\
 |t|^{p-2}t\Big(\log|t|+1\Big) & \mbox{ if }|t|>1
\end{cases}\qquad \hbox{with $1 < p < q < p^\ast$,}
\end{equation}
then problem \eqref{euler0} admits at least one nontrivial bounded 
weak solution.
\end{theorem}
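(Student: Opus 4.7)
The plan is to read Theorem~\ref{coromain} as a direct specialization of the general existence result that will be established in Section~\ref{secmain} for problem \eqref{euler} under abstract structural hypotheses on $A(x,t)$ together with $(G_0)$--$(G_4)$ on $g(x,t)$. Hence the proof should consist in verifying that the specific choice $A(x,t)=a(x)-\arctg|t|^\theta$ satisfies every structural condition on the coefficient, that the piecewise nonlinearity $g_1$ of \eqref{mod1} satisfies $(G_0)$--$(G_4)$, and finally in invoking the abstract theorem.

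For the coefficient I would check the standard ingredients one by one: uniform ellipticity $A(x,t)\ge a_0-\pi/2>0$, uniform boundedness $A(x,t)\le\|a\|_\infty$, the $C^1$ dependence in $t$ with
\[
A_t(x,t)\;=\;-\frac{\theta|t|^{\theta-2}t}{1+|t|^{2\theta}},
\]
and the sign condition $A_t(x,t)\,t\le 0$, which is precisely the one required in the failure-of-AR setting and is highlighted in Remark~\ref{rema43}. The constraint $1<\theta\le p$ controls the behaviour of $A_t$ near $t=0$ and at infinity, yielding $|A_t(x,t)|\,|t|\le c\min\{|t|^\theta,|t|^{-\theta}\}$, which is compatible with a $(p-1)$-type bound on the extra gradient term $\tfrac1p A_t(x,u)|\nabla u|^p$. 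The quantitative threshold $a_0>\pi/2+\theta/(2p)$ will then come from the key coercivity-type hypothesis of the abstract framework, which I anticipate to read $pA(x,t)+A_t(x,t)\,t\ge\mu>0$: using the elementary estimate $|t|^\theta/(1+|t|^{2\theta})\le 1/2$,
\[
pA(x,t)+A_t(x,t)\,t\ =\ p\,a(x)-p\arctg|t|^\theta-\frac{\theta|t|^\theta}{1+|t|^{2\theta}}\ \ge\ p\Bigl(a_0-\frac{\pi}{2}-\frac{\theta}{2p}\Bigr)\ >\ 0,
\]
so the threshold reads off naturally from the maxima of $\arctg$ and of $|t|^\theta/(1+|t|^{2\theta})$, and is sharp in this family of coefficients.

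For the nonlinearity, the verification of $(G_0)$--$(G_4)$ on $g_1$ is essentially routine calculus: continuity across $|t|=1$ (both branches give $\pm 1$ there) and the trivial Carath\'eodory character (independent of $x$); subcritical growth, since $|g_1(t)|\le c_{\eps}(1+|t|^{p-1+\eps})$ for every $\eps>0$ and $p<p^*$; the behaviour $g_1(t)\sim |t|^{q-2}t$ with $q>p$ near $0$, so that $g_1(t)/|t|^{p-1}\to 0$; and $p$-superlinearity at infinity through $G_1(t)\sim p^{-1}|t|^p\log|t|$, giving $G_1(t)/|t|^p\to\infty$ while $g_1(t)\,t/G_1(t)\to p$, which is exactly why the classical Ambrosetti--Rabinowitz condition fails here. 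The substitute $(G_4)$ (which I expect to be a monotonicity of $t\mapsto g(x,t)/|t|^{p-1}$ on each half-line, or equivalently a sign property of $pG-gt$) I would verify directly on $g_1$: the ratio equals $t^{q-p}$ on $(0,1]$ and $\log t+1$ on $(1,\infty)$, each branch strictly increasing with matching values at $t=1$, and the odd symmetry of $g_1$ transports the monotonicity to $(-\infty,0)$.

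Once these verifications are in hand, the abstract existence theorem of Section~\ref{secmain} applies directly and produces the nontrivial bounded weak solution of \eqref{euler0}. The main obstacle is identifying which abstract inequality pins down the quantitative bound $a_0>\pi/2+\theta/(2p)$ and carrying out the two optimisations on $\arctg|t|^\theta$ and on $|t|^\theta/(1+|t|^{2\theta})$ correctly; every other condition is a clean piece-by-piece verification.
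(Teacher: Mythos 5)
Your overall strategy is exactly the paper's: Theorem \ref{coromain} is proved by checking that $A(x,t)=a(x)-\arctg|t|^{\theta}$ satisfies the structural hypotheses $(H_0)$--$(H_4)$ of Section \ref{secmain} and then invoking Theorem \ref{main}, and your treatment of several of these is correct. In particular, you correctly identify $A_t(x,t)=-\theta|t|^{\theta-2}t/(1+|t|^{2\theta})$, the uniform bounds $a_0-\pi/2\le A\le\|a\|_\infty$, the sign condition $A_t(x,t)t\le 0$, and, most importantly, the origin of the threshold $a_0>\pi/2+\theta/(2p)$: your inequality $pA(x,t)+A_t(x,t)t\ge p\bigl(a_0-\tfrac{\pi}{2}-\tfrac{\theta}{2p}\bigr)>0$, obtained from $\arctg\le\pi/2$ and $|t|^{\theta}/(1+|t|^{2\theta})\le 1/2$, is precisely what gives $(H_3)$ (since $A$ is bounded above and below, it yields $A+\tfrac1p A_t t\ge\alpha_1 A$). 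Your verification of $(G_0)$--$(G_4)$ for $g_1$ is also sound in substance (the monotonicity of $g_1(t)/t^{p-1}$ gives the condition on $\sigma(x,t)=g(x,t)t-pG(x,t)$, which is the paper's $(G_3)$, not $(G_4)$; $(G_4)$ is the vanishing of $G(x,t)/|t|^p$ at $0$, which you also check).

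There is, however, a genuine gap: you never verify (nor even anticipate) hypothesis $(H_4)$, namely $A_t(x,st)\,s^{p+1}t\ge A_t(x,t)\,t$ for all $s\in[0,1]$, which is an essential ingredient in the proof of the weak Cerami--Palais--Smale condition (it is used to compare $\J(s_nu_n)$ with $\J(u_n)$ in Proposition \ref{wCPS}); the sign condition $A_t(x,t)t\le 0$ that you do check is only its $s=0$ case. For the model coefficient, $(H_4)$ amounts to
\[
\frac{\theta\,s^{\theta+p}|t|^{\theta}}{1+s^{2\theta}|t|^{2\theta}}\ \le\ \frac{\theta\,|t|^{\theta}}{1+|t|^{2\theta}},
\qquad\hbox{i.e.}\qquad
s^{\theta+p}+s^{\theta+p}|t|^{2\theta}\ \le\ 1+s^{2\theta}|t|^{2\theta},
\]
which holds for all $s\in[0,1]$ exactly because $\theta+p\ge 2\theta$, i.e. $\theta\le p$. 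This is the true role of the assumption $\theta\le p$ in the statement; your explanation of that assumption (a growth bound $|A_t(x,t)||t|\le c\min\{|t|^{\theta},|t|^{-\theta}\}$ making the lower-order term ``compatible with a $(p-1)$-type bound'') does not correspond to any hypothesis of the paper and would not by itself justify applying Theorem \ref{main}. The requirement $\theta>1$ enters only to make $A(x,\cdot)$ of class $C^1$ at $t=0$ (hypothesis $(H_0)$). With the verification of $(H_4)$ added as above, your argument matches the paper's proof.
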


We note that the assumptions on $a(x)$ and $\theta$, given in Theorem \ref{coromain}, 
allow function $A(x,t)$ in \eqref{mod0} to verify all the conditions $(H_0)$--$(H_4)$
required in Section \ref{secmain}. 
Thus, Theorem \ref{coromain} is a corollary of Theorem \ref{main}
(see also Example \ref{exnoar}).


\section{Abstract tools} \label{secabstract}

Throughout this section, we denote $\N = \{1, 2, \dots\}$ and assume that:
\begin{itemize}
\item $(X, \|\cdot\|_X)$ is a Banach space with dual space
$(X',\|\cdot\|_{X'})$,
\item $(W,\|\cdot\|_W)$ is a Banach space such that
$X \hookrightarrow W$ continuously, i.e. $X \subset W$ and a constant $\rho_0 > 0$ exists
such that
\begin{equation}
\label{continuity}
\|u\|_W \ \le \ \rho_0\ \|u\|_X\qquad \hbox{for all $u \in X$,}
\end{equation}
\item $J : {\cal D} \subset W \to \R$ and $J \in C^1(X,\R)$ with $X \subset {\cal D}$.
\end{itemize}
Furthermore, fixing $c \in \R$, we define
\begin{itemize}
\item $K_J^c = \{u \in X:\ J(u) = c,\ dJ(u) = 0\}$
the set of the critical points of $J$ in $X$ at level $c$,
\item $J^c = \{u\in X:\ J(u) \le c\}$
the sublevel of $J$ with respect to $c$.
\end{itemize}

For simplicity, taking $c \in \R$, we say that a sequence
$(u_n)_n\subset X$ is a {\sl Cerami--Palais--Smale sequence at level $c$},
briefly {\sl $(CPS)_c$--sequence}, if
\[
\lim_{n \to +\infty}J(u_n) = c\quad\mbox{and}\quad 
\lim_{n \to +\infty}\|dJ(u_n)\|_{X'} (1 + \|u_n\|_X) = 0.
\]
Moreover, $c$ is a {\sl Cerami--Palais--Smale level}, briefly {\sl $(CPS)$--level}, 
if there exists a $(CPS)_c$--sequence.

Functional $J$ satisfies the classical Cerami--Palais--Smale condition in $X$ 
at level $c$ if every $(CPS)_c$--sequence converges in $X$
up to subsequences. Anyway, thinking about the setting of our problem,
in general $(CPS)_c$--sequences may also exist which are unbounded in $\|\cdot\|_X$
but converge with respect to $\|\cdot\|_W$. Then, we can weaken the classical Cerami--Palais--Smale 
condition in the following way.  

\begin{definition} \label{wCPSdef}
Given $c\in \R$, functional $J$ satisfies the
{\slshape weak Cerami--Palais--Smale 
condition at level $c$}, 
briefly {\sl $(wCPS)_c$ condition}, if for every $(CPS)_c$--sequence $(u_n)_n$,
a point $u \in X$ exists such that 
\begin{description}{}{}
\item[{\sl (i)}] $\displaystyle 
\lim_{n \to+\infty} \|u_n - u\|_W = 0\quad$ (up to subsequences),
\item[{\sl (ii)}] $J(u) = c$, $\; dJ(u) = 0$.
\end{description}
We say that $J$ satisfies the $(wCPS)$ condition in $I$, $I$ real interval,
if $J$ satisfies the $(wCPS)_c$ condition at each level $c \in I$.
\end{definition} 

Due to the convergence only in the norm $\|\cdot\|_W$, 
the $(wCPS)_c$ condition implies that the set of critical points of $J$ at level $c$
is compact with respect to $\|\cdot\|_W$. 
Anyway, this weaker ``compactness'' assumption allows one to prove 
the following Deformation Lemma  
(see \cite[Lemma 2.3]{CP3} which is stated in the weaker condition that
each $(CPS)$--level is a critical level, too). 

\begin{lemma}[Deformation Lemma] \label{deformo}
Let $J\in C^1(X,\R)$ and consider $c \in \R$ such that
\begin{itemize}
\item $J$ satisfies the $(wCPS)_c$ condition, 
\item $\ K_J^c = \emptyset$.
\end{itemize}
Then, fixing any $\bar \eps > 0$, there exist
a constant $\eps > 0$ and a homeomorphism $\psi : X \to X$
such that $2\eps < {\bar \eps}$ and
\begin{itemize}
\item[$(i)$] $\psi(J^{c+\eps}) \subset J^{c-\eps}$,
\item[$(ii)$] $\psi(u) = u$ for all $u \in X$ such that either
$J(u) \le c-\bar\eps$ or $J(u) \ge c+\bar\eps$.
\end{itemize}
Moreover, if $J$ is even on $X$, then $\psi$ can be chosen odd.
\end{lemma}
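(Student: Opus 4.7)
The plan is to adapt the classical quantitative deformation argument to the Cerami--Palais--Smale setting, using the $(wCPS)_c$ hypothesis only to derive a uniform lower bound on $\|dJ(u)\|_{X'}(1+\|u\|_X)$ in a neighborhood of level $c$. The starting point is the following contradiction argument: if no pair $\sigma > 0$, $\beta \in (0,\bar\eps)$ existed such that
\[
\|dJ(u)\|_{X'} (1 + \|u\|_X) \ge \sigma \qquad \text{whenever } |J(u) - c| \le \beta,
\]
one could extract a $(CPS)_c$--sequence $(u_n)_n$; by $(wCPS)_c$ it would converge in $W$ to some $u \in X$ with $J(u) = c$ and $dJ(u) = 0$, contradicting $K_J^c = \emptyset$. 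This quantitative estimate is the only place where $(wCPS)_c$ enters the proof.

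Next, on the open set of regular points $\widetilde X = \{u \in X : dJ(u) \ne 0\}$ I would construct a locally Lipschitz pseudo-gradient-type vector field $V : \widetilde X \to X$ adapted to the Cerami weight, namely satisfying
\[
\|V(u)\|_X \le 1, \qquad \langle dJ(u), V(u) \rangle \ge \tfrac{1}{2} \, \frac{\|dJ(u)\|_{X'}}{1 + \|u\|_X},
\]
by the usual partition-of-unity construction on $\widetilde X$. If $J$ is even I would choose $V$ to be odd in the standard way (by replacing $V(u)$ with $\tfrac{1}{2}(V(u) - V(-u))$ on the symmetric set). I would then take a Lipschitz cutoff $\eta : X \to [0,1]$ with $\eta \equiv 1$ on $\{|J - c| \le \beta/2\}$ and $\eta \equiv 0$ on $\{|J - c| \ge \beta\}$.

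With these tools I would define the deformation as the time--$T$ flow of the Cauchy problem
\[
\frac{d\sigma}{dt}(t,u) = - \eta(\sigma(t,u)) \, (1 + \|\sigma(t,u)\|_X) \, V(\sigma(t,u)), \qquad \sigma(0,u) = u,
\]
extending $V$ by zero outside $\widetilde X$ (which is legitimate because $\eta$ vanishes at points of $K_J^c$, and $K_J^c$ is empty anyway in the region where $\eta \ne 0$). Standard ODE theory in the Banach space $X$ gives a global continuous flow and each time--$t$ map is a homeomorphism of $X$; setting $\psi(\cdot) = \sigma(T,\cdot)$ with $T$ chosen so that $2\eps := \min\{\beta, \tfrac{\sigma T}{4}\} < \bar\eps$ ensures property $(ii)$ automatically (since $\eta = 0$ outside the strip $|J - c| \le \beta$) and yields $(i)$ by the usual energy estimate: along any trajectory starting at $u \in J^{c+\eps}$ the function $t \mapsto J(\sigma(t,u))$ is non--increasing and, as long as it stays in $[c-\eps, c+\eps]$, decreases at rate $\ge \sigma/2$, which after time $T$ forces $\sigma(T,u) \in J^{c-\eps}$.

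The main delicate point is the quantitative lower bound on $\|dJ(u)\|_{X'}(1+\|u\|_X)$ near level $c$, because the $(wCPS)_c$ condition only provides convergence in the weaker norm $\|\cdot\|_W$; however, since the limit $u$ is required to belong to $X$ and to satisfy $dJ(u) = 0$, the limit lies in $K_J^c$ and the contradiction closes as in the classical case. Everything else (pseudo--gradient construction, ODE flow, energy estimate, odd symmetrization) is parallel to the standard Palais--Smale deformation lemma, adapted only by the $(1+\|u\|_X)$--weight intrinsic to the Cerami variant.
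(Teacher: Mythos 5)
Your overall route is the right one and, as far as the key conceptual point goes, it matches what the paper relies on (the paper gives no proof, deferring to \cite[Lemma 2.3]{CP3}): the $(wCPS)_c$ condition together with $K_J^c=\emptyset$ is used \emph{only} to rule out $(CPS)_c$--sequences and hence to produce constants $\delta,\beta>0$ with $\|dJ(u)\|_{X'}(1+\|u\|_X)\ge\delta$ on the strip $\{|J(u)-c|\le\beta\}$; after that the argument is the classical Cerami--weighted pseudo--gradient flow in $X$, and the oddness, cut--off and global--existence points are handled correctly. Note also that the legitimacy of extending $V$ by zero should be justified by observing that the lower bound excludes \emph{all} critical points (at every level) from the closed strip $\{|J-c|\le\beta\}$, not merely those in $K_J^c$; this is exactly what guarantees that every zero of $dJ$ has a neighbourhood on which $\eta\equiv 0$, so the truncated field is locally Lipschitz on all of $X$.

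There is, however, one quantitative step that fails as written: your pseudo--gradient normalization. With $\|V(u)\|_X\le 1$ and $\langle dJ(u),V(u)\rangle\ge\tfrac12\,\|dJ(u)\|_{X'}/(1+\|u\|_X)$, the flow $\dot\sigma=-\eta(\sigma)(1+\|\sigma\|_X)V(\sigma)$ gives only
\[
\frac{d}{dt}J(\sigma)\ \le\ -\frac{\eta(\sigma)}{2}\,\|dJ(\sigma)\|_{X'}\ \le\ -\frac{\eta(\sigma)\,\delta}{2\,(1+\|\sigma\|_X)},
\]
so the decay rate degenerates as $\|\sigma\|_X$ grows; since $\|u\|_X$ is unbounded on $J^{c+\eps}$, no fixed time $T$ yields the uniform drop you claim, and the asserted rate $\delta/2$ does not follow from your stated inequalities. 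The repair is standard: by the definition of the dual norm, the partition--of--unity construction gives a locally Lipschitz $V$ with $\|V(u)\|_X\le 1$ and $\langle dJ(u),V(u)\rangle\ge\tfrac12\|dJ(u)\|_{X'}$ (no $(1+\|u\|_X)^{-1}$ factor), the Cerami weight being kept only in the ODE; then on $\{\eta=1\}$ one gets $\frac{d}{dt}J(\sigma)\le-\tfrac12(1+\|\sigma\|_X)\|dJ(\sigma)\|_{X'}\le-\delta/2$, and your choice of $T$, $\eps$ and the conclusion of items $(i)$--$(ii)$, as well as the odd symmetrization when $J$ is even, go through exactly as you describe.
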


From Lemma \ref{deformo} the following generalization of the Mountain Pass Theorem
in \cite[Theorem 2.1]{AR} can be stated (for the proof, see \cite[Theorem 1.7]{CP3}).

\begin{theorem}[Mountain Pass Theorem]
\label{mountainpass}
Let $J\in C^1(X,\R)$ be such that $J(0) = 0$
and the $(wCPS)$ condition holds in $\R$.
Moreover, assume that two constants
$r_0$, $\varrho_0 > 0$ and a point $e \in X$ exist such that
\begin{equation}\label{aa1}
u \in X, \; \|u\|_W = r_0\quad \then\quad J(u) \ge \varrho_0,
\end{equation}
\begin{equation}\label{aa2}
\|e\|_W > r_0\qquad\hbox{and}\qquad J(e) < \varrho_0.
\end{equation}
Then, $J$ has a Mountain Pass critical point $u^* \in X$ such that $J(u^*) \ge \varrho_0$.
\end{theorem}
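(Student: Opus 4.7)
\medskip

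\textbf{Proof proposal.} The plan is to run the classical minimax argument, replacing the usual Palais--Smale-based deformation with Lemma~\ref{deformo}. First I would introduce the family of admissible paths
\[
\Gamma \ := \ \{\gamma \in C([0,1],X)\ :\ \gamma(0) = 0,\ \gamma(1) = e\}
\]
and the corresponding minimax level
\[
c \ := \ \inf_{\gamma \in \Gamma}\, \max_{t \in [0,1]} J(\gamma(t)),
\]
so that the conclusion will be established if I show that $c$ is a critical value with $c \ge \varrho_0$.

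To obtain the lower bound $c \ge \varrho_0$, I would use the mountain geometry \eqref{aa1}. Since $X \hookrightarrow W$ continuously by \eqref{continuity}, each $\gamma \in \Gamma$ is also continuous into $(W,\|\cdot\|_W)$; as $\|\gamma(0)\|_W = 0 < r_0 < \|e\|_W = \|\gamma(1)\|_W$, the intermediate value theorem gives some $t_\gamma \in (0,1)$ with $\|\gamma(t_\gamma)\|_W = r_0$, so \eqref{aa1} forces $\max_{[0,1]} J(\gamma) \ge J(\gamma(t_\gamma)) \ge \varrho_0$. Taking the infimum over $\Gamma$ yields $c \ge \varrho_0$.

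To conclude, I would argue by contradiction, assuming $K_J^c = \emptyset$. Since $J(0) = 0 < \varrho_0 \le c$ and $J(e) < \varrho_0 \le c$, I can fix
\[
\bar\eps \ := \ \tfrac{1}{2} \min\{c - J(e),\, c\} \ > \ 0
\]
and invoke Lemma~\ref{deformo}: there exist $\eps \in (0,\bar\eps/2)$ and a homeomorphism $\psi : X \to X$ such that $\psi(J^{c+\eps}) \subset J^{c-\eps}$, while $\psi$ is the identity on $\{J \le c - \bar\eps\}$. The choice of $\bar\eps$ forces $J(0),\,J(e) \le c - \bar\eps$, hence $\psi(0) = 0$ and $\psi(e) = e$. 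By definition of $c$, some $\gamma \in \Gamma$ satisfies $\max_{[0,1]} J(\gamma) < c + \eps$; then $\tilde\gamma := \psi \circ \gamma$ still belongs to $\Gamma$ (it is $X$-continuous as the composition of an $X$-continuous path with a homeomorphism of $X$) and $\max_{[0,1]} J(\tilde\gamma) \le c - \eps$, contradicting the minimax definition of $c$. Therefore some $u^* \in K_J^c$ exists, and $J(u^*) = c \ge \varrho_0$.

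The delicate point is the interplay between the two norms. The mountain geometry \eqref{aa1}--\eqref{aa2} and the $(wCPS)$ convergence are formulated with respect to $\|\cdot\|_W$, while the functional is $C^1$ and the paths live in $X$. The continuous embedding \eqref{continuity} is precisely what makes the intermediate-value crossing argument work and what reconciles the endpoint-fixing property of $\psi$ with the minimax scheme; without it, the path space itself would need to be reformulated. Everything else is routine once Lemma~\ref{deformo} is available, since the pushing-down-paths strategy is insensitive to whether one uses the classical or the weak Cerami--Palais--Smale condition.
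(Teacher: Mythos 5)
Your argument is correct and is exactly the route the paper intends: the paper does not reproduce the proof but refers to \cite[Theorem 1.7]{CP3}, which runs the same classical minimax scheme over paths joining $0$ to $e$, gets $c\ge\varrho_0$ from the $\|\cdot\|_W$--crossing argument via \eqref{continuity}, and derives the contradiction from the Deformation Lemma \ref{deformo} under the $(wCPS)$ condition. Your choice of $\bar\eps$ and the verification that $\psi$ fixes the endpoints are handled properly, so nothing is missing.
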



\section{Variational setting and first properties}
\label{variational}

Here and in the following, $|\cdot|$ is the standard norm 
on any Euclidean space as the dimension
of the considered vector is clear and no ambiguity arises
and $\meas(B)$ is the usual $N$--dimensional
Lebesgue measure of a measurable set $B$ in $\R^N$.
Furthermore, let $\Omega \subset \R^N$ be an open bounded domain, $N\ge 1$,
so we denote by:
\begin{itemize}
\item $L^\nu(\Omega)$ the Lebesgue space with
norm $|u|_\nu = \left(\int_{\Omega}|u|^\nu dx\right)^{1/\nu}$ if $1 \le \nu < +\infty$,  
 $u\in L^\nu(\Omega)$;
\item $L^\infty(\Omega)$ the space of Lebesgue--measurable 
and essentially bounded functions $u :\Omega \to \R$ with norm
\[
|u|_{\infty} = \esssup_{\Omega} |u|;
\]
\item $W_0^{1,p}(\Omega)$ the classical Sobolev space with
norm $\|u\|_{W} = |\nabla u|_p$ if $1\leq p<+\infty$, $u\in W_0^{1,p}(\Omega)$.
\end{itemize}

From now on, let $\, A : \Omega \times \R \to \R\,$
and $\, g :\Omega \times \R \to \R\,$ be such that
the following conditions hold:
\begin{itemize}
\item[$(H_0)$]
$A(x,t)$ is a $C^1$ Carath\'eodory function, i.e., \\
$A(\cdot,t) : x \in \Omega \mapsto A(x,t) \in \R$ is measurable for all $t \in \R$,\\
$A(x,\cdot) : t \in \R \mapsto A(x,t) \in \R$ 
is $C^1$ for a.e. $x \in \Omega$ with $A_t(x,t) = \frac{\partial}{\partial t} A(x,t)$;
\item[$(H_1)$] $A(x,t)$ and $A_t(x,t)$ are essentially bounded
if $t$ is bounded, i.e., 
\[
\sup_{|t| \le r} |A(\cdot,t)| \in L^\infty(\Omega),\quad 
\sup_{|t| \le r} |A_t(\cdot,t)| \in L^\infty(\Omega)
\qquad \hbox{for any $r > 0$;}
\]
\item[$(G_0)$] 
$g(x,t)$ is a Carath\'eodory function, i.e.,\\
$g(\cdot,t) : x \in \Omega \mapsto g(x,t) \in \R$ is measurable for all $t \in \R$,\\
$g(x,\cdot) : t \in \R \mapsto g(x,t) \in \R$ is continuous for a.e. $x \in \Omega$;
\item[$(G_1)$] $a_1$, $a_2 > 0$ and $q \ge 1$ exist such that
\[
|g(x,t)| \le a_1 + a_2 |t|^{q-1} \qquad
\hbox{a.e. in $\Omega$, for all $t \in \R$.}
\]
\end{itemize}

\begin{remark}\label{suG}
By definition, it is $G(x,0) = 0$ a.e. in $\Omega$; furthermore,
from $(G_0)$--$(G_1)$ it follows that $G(x,t)$ is a $C^1$ Carath\'eodory function
in $\Omega \times \R$ and there exist $a_3$, $a_4 > 0$ such that
\begin{equation}
\label{alto3}
|G(x,t)| \le a_3 + a_4 |t|^q\qquad\hbox{a.e in $\Omega$, for all $t \in \R$.}
\end{equation}
We note that, unlike the classical assumption $(G_1)$ which requires 
$q<p^*$ for obtaining the regularity of the associated Nemytskii operator
(see \cite{AP}), here no upper bound on $q$ is actually assumed. 
\end{remark}

In order to investigate the existence of weak solutions  
of the nonlinear problem \eqref{euler}, the notation introduced for the abstract 
setting at the beginning of Section \ref{secabstract}
is referred to our problem with $W = W^{1,p}_0(\Omega)$ and the
Banach space $(X,\|\cdot\|_X)$ defined as
\begin{equation}\label{space}
X := W^{1,p}_0(\Omega) \cap L^\infty(\Omega),\qquad
\|u\|_X = \|u\|_W + |u|_\infty.
\end{equation}
Moreover, from the Sobolev Embedding Theorem, for any $\nu \in [1,p^*[$,
with $p^* = \frac{pN}{N-p}$ as $N > p$ otherwise $p^*=+\infty$,
a constant $\rho_\nu > 0$ exists, such that 
\[
|u|_\nu\ \le\ \rho_\nu \|u\|_W \quad \hbox{for all $u \in W^{1,p}_0(\Omega)$}
\]
and the embedding $W^{1,p}_0(\Omega) \hookrightarrow\hookrightarrow L^\nu(\Omega)$
is compact.

From the definition of $X$, we have that $X \hookrightarrow W^{1,p}_0(\Omega)$ and $X \hookrightarrow L^\infty(\Omega)$
with continuous embeddings, and \eqref{continuity} holds with $\rho_0 = 1$.

We note that $X = W^{1,p}_0(\Omega)$ if $p > N>1$ or $p\geq N=1$,  as in 
these cases $W^{1,p}_0(\Omega) \hookrightarrow L^\infty(\Omega)$, so the 
abstract part is the standard one with the usual Mountain Pass Theorem.

Now, we consider the functional $\J : X \to \R$ defined as \eqref{funct}.

Taking any $u$, $v\in X$, by direct computations
it follows that its G\^ateaux differential in $u$ along the direction $v$ is
\begin{equation}
\label{diff}
\langle d\J(u),v\rangle = \int_\Omega A(x,u) |\nabla u|^{p-2} \nabla u\cdot \nabla v\ dx
+ \frac1p\ \int_{\Omega} A_t(x,u) v |\nabla u|^{p} dx
- \int_\Omega g(x,u)v\ dx .
\end{equation}

The following regularity result holds (see \cite[Proposition 3.2]{CPS1}). 

\begin{proposition}\label{smooth1}
Taking $p > 1$, assume that $(H_0)$--$(H_1)$, $(G_0)$--$(G_1)$ hold.
If $(u_n)_n \subset X$, $u \in X$ are such that
\[
\|u_n - u\|_W \to 0, \quad u_n \to u\; \hbox{a.e. in $\Omega$} \ \qquad\hbox{if $n \to+\infty$}
\]
\[
\hbox{and $M > 0$ exists so that $|u_n|_\infty \le M$ for all $n \in \N$,}
\]
then
\[
\J(u_n) \to \J(u)\quad \hbox{and}\quad \|d\J(u_n) - d\J(u)\|_{X'} \to 0
\quad\hbox{if $\ n\to+\infty$.}
\]
Hence, $\J$ is a $C^1$ functional on $X$ with Fr\'echet differential  
defined as in \eqref{diff}.
\end{proposition}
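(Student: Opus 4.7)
The plan is to fix an $L^\infty$ bound $M$ for the sequence (noting $|u|_\infty\le M$ by a.e.\ convergence), use $(H_0)$--$(H_1)$ to get pointwise convergence together with uniform $L^\infty$ bounds on $A(x,u_n)$ and $A_t(x,u_n)$, and combine these with the strong $W^{1,p}_0$ convergence of $u_n$ to $u$ by splitting every difference into two pieces: one in which only the coefficient varies and one in which only the gradient term varies. The first piece is handled via Vitali's theorem (since $|\nabla u_n|^p\to|\nabla u|^p$ in $L^1(\Omega)$, hence is equi-integrable, and the coefficient difference is bounded and tends to $0$ a.e.); the second piece is handled by strong $L^p\to L^1$ or $L^p\to L^{p'}$ continuity combined with the $L^\infty$ bound on the coefficient.

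For $\J(u_n)\to\J(u)$ I first write
\[
\int_\Omega A(x,u_n)|\nabla u_n|^p\,dx-\int_\Omega A(x,u)|\nabla u|^p\,dx
=\int_\Omega [A(x,u_n)-A(x,u)]|\nabla u_n|^p\,dx+\int_\Omega A(x,u)[|\nabla u_n|^p-|\nabla u|^p]\,dx,
\]
applying Vitali to the first summand and using $A(x,u)\in L^\infty$ together with $|\nabla u_n|^p\to|\nabla u|^p$ in $L^1$ for the second. The $G$--term is immediate from $(G_0)$--$(G_1)$ and Remark \ref{suG}: $|G(x,u_n)|\le a_3+a_4 M^q$ and $G(x,u_n)\to G(x,u)$ a.e., so dominated convergence concludes.

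For $\|d\J(u_n)-d\J(u)\|_{X'}\to 0$, I estimate $|\langle d\J(u_n)-d\J(u),v\rangle|$ for arbitrary $v\in X$ with $\|v\|_X\le 1$ (so $|v|_\infty\le 1$ and $\|v\|_W\le 1$) using \eqref{diff}. The source term $\int[g(x,u_n)-g(x,u)]v\,dx$ is bounded by $|g(\cdot,u_n)-g(\cdot,u)|_1\cdot|v|_\infty$ and vanishes by dominated convergence (the integrand is bounded in $L^\infty$ via $(G_1)$). The $A_t$--term is split as above: the factor $|v|\le 1$ is discarded and I bound the two pieces by Vitali and by $|A_t(x,u)|_\infty\cdot\||\nabla u_n|^p-|\nabla u|^p\|_1$. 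The leading term is split in the same way; for the coefficient--varying piece I use H\"older with exponents $p'$ and $p$,
\[
\Bigl|\int_\Omega[A(x,u_n)-A(x,u)]|\nabla u_n|^{p-2}\nabla u_n\cdot\nabla v\,dx\Bigr|
\le\Bigl(\int_\Omega |A(x,u_n)-A(x,u)|^{p'}|\nabla u_n|^p\,dx\Bigr)^{1/p'}|\nabla v|_p,
\]
and Vitali again yields vanishing of the first factor uniformly in $v$; for the gradient--varying piece I invoke the standard continuity of $\xi\mapsto|\xi|^{p-2}\xi$ from $L^p$ to $L^{p'}$ and the bound $|A(x,u)|_\infty$.

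The last assertion, $\J\in C^1(X,\R)$, then follows because any $u_n\to u$ in $X$ satisfies $\|u_n-u\|_W\to 0$, $|u_n-u|_\infty\to 0$ (hence a.e.\ convergence up to a subsequence) and is eventually bounded in $L^\infty$ by $M=|u|_\infty+1$, so the hypotheses of the proposition apply on every subsequence, giving continuity of $d\J\colon X\to X'$. The main obstacle is really the coefficient--varying piece in the principal part: naively $|A(x,u_n)-A(x,u)|^{p'}|\nabla u_n|^p$ is a product of a bounded $a.e.$--null factor with a merely $L^1$--convergent factor, and the cleanest route to killing it uniformly in $v$ is equi-integrability of $\{|\nabla u_n|^p\}$ plus Vitali; the uniform $L^\infty$ bound $M$ on $u_n$ is what makes all bounds on $A$, $A_t$ and $g$ depend on $M$ only, which is essential because $X$--convergence (not merely $W$--convergence) is exactly what provides such a bound in the $C^1$ step.
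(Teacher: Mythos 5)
Your argument is correct. Note that for this statement the paper gives no proof of its own: Proposition \ref{smooth1} is simply quoted from \cite[Proposition 3.2]{CPS1}, so there is nothing in the text to compare against, and your write-up supplies the expected self-contained argument: split each difference into a coefficient-varying piece and a gradient-varying piece, kill the former by the uniform $L^\infty$ bounds from $(H_1)$ (with constant depending only on $M$), a.e.\ convergence of $A(x,u_n)$, $A_t(x,u_n)$, $g(x,u_n)$ and equi-integrability of $\{|\nabla u_n|^p\}$ (Vitali), and the latter by the continuity of the maps $\xi\mapsto|\xi|^p$ from $L^p$ to $L^1$ and $\xi\mapsto|\xi|^{p-2}\xi$ from $L^p$ to $L^{p'}$; all bounds are uniform over $\|v\|_X\le 1$, which is exactly what the $X'$ norm requires, and the H\"older exponents in the principal part ($(p-1)p'=p$) are right. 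Two small points to make explicit: in the Vitali step the integrand $|A(x,u_n)-A(x,u)|^{p'}|\nabla u_n|^p$ tends to $0$ only in measure (a.e.\ convergence of $\nabla u_n$ holds just along subsequences), so either argue in measure, use the sub-subsequence trick, or bound it by $|A(x,u_n)-A(x,u)|^{p'}|\nabla u|^p+(2C_M)^{p'}\bigl||\nabla u_n|^p-|\nabla u|^p\bigr|$ and finish with dominated convergence plus the $L^1$ convergence; and in the final $C^1$ step convergence in $|\cdot|_\infty$ already yields a.e.\ convergence of the whole sequence (no subsequence needed), after which continuity of $d\J$ together with the G\^ateaux differentiability formula \eqref{diff} gives the Fr\'echet $C^1$ claim.
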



\section{Statement of the main result} \label{secmain}

From now on, we assume that in addition to hypotheses $(H_0)$--$(H_1)$ and $(G_0)$--$(G_1)$,
functions $A(x,t)$ and $g(x,t)$ satisfy the following further conditions:
\begin{itemize}
\item[$(H_2)$] a constant $\alpha_0 > 0$ exists such that
\[
A(x,t) \ge \alpha_0 \qquad \hbox{a.e. in $\Omega$, for all $t \in \R$;}
\]
\item[$(H_3)$] some constants $R_0 \ge 1$ and $\alpha_1 > 0$ exist such that
\[
A(x,t) + \frac1p A_t(x,t) t \ge \alpha_1 A(x,t)\quad\hbox{a.e. in $\Omega$ if $|t| \ge R_0$;}
\]
\item[$(H_4)$] $\quad A_t(x,s t) s^{p+1} t \ge A_t(x,t) t \; $ for all $s\in [0,1]$,
for a.e. $x \in \Omega$ and all $t \in \R$;
\item[$(G_2)$] $\; \displaystyle \lim_{|t| \to +\infty} \frac{G(x,t)}{|t|^{p}}\ =\ +\infty\;$ 
uniformly for a.e. $x \in \Omega$;
\item[$(G_3)$] taking $\sigma(x,t) = g(x,t) t - p G(x,t)$, assume that $\beta \in L^1(\Omega)$ exists
such that $\beta(x) \ge 0$ a.e. in $\Omega$ and 
\[
\sigma(x,t_1) \le \sigma(x,t_2) + \beta(x)\quad\hbox{a.e. in $\Omega$, 
for all $0 \le t_1 \le t_2$ or $t_2\le t_1 \le 0$;}
\]
\item[$(G_4)$] $\displaystyle \lim_{t\to 0}\frac{G(x,t)}{|t|^p}=0\, $ uniformly for a.e. $x\in \Omega$. 
\end{itemize}

\begin{remark}
We emphasize the fact that by $(H_3)$ we can handle the case $A_t(x,t)t\leq 0$. Notice that, otherwise, condition $(H_3)$ can be omitted when $g$ satisfies the Ambrosetti-Rabinowitz condition, see \cite{pellacci}. In this way, our existence result also extends the one proved in \cite{pellacci} in the difficult situation in which g does not satisfies the Ambrosetti-Rabinowitz condition.
\end{remark}

\begin{remark}
Condition $(G_3)$ was introduced in \cite{MP} in order to prevent the 
use of the Ambrosetti--Rabinowitz condition, and a slight improvement has been 
recently proposed in \cite{mupli}. 
See also \cite{bernimug} for an application in a different framework.
\end{remark}

\begin{example}\label{exnoar}
We note that function $g_1(t)$ in \eqref{mod1} fails to satisfy the Ambrosetti--Rabinowitz condition
but verifies conditions $(G_0)$--$(G_4)$.
On the contrary, function
\[
g(x,t) = g_2(t) = |t|^{q-2}t \qquad \hbox{with $p<q<p^\ast$,}
\]
satisfies both the Ambrosetti-Rabinowitz condition and hypotheses $(G_0)$--$(G_4)$.
\end{example}

Our main result reads as follows.

\begin{theorem}\label{main}
Assume $(H_0)$--$(H_4)$ and $(G_0)$--$(G_4)$.
Then problem \eqref{euler} admits a nontrivial bounded weak solution.
\end{theorem}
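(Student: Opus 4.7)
The strategy is to apply the generalized Mountain Pass Theorem (Theorem \ref{mountainpass}) to the functional $\J$ defined in \eqref{funct} on the space $X$ from \eqref{space}. Proposition \ref{smooth1} gives $\J\in C^1(X,\R)$ under $(H_0)$--$(H_1)$ and $(G_0)$--$(G_1)$, and trivially $\J(0)=0$; so I need to check the two geometric conditions \eqref{aa1}--\eqref{aa2} and the $(wCPS)_c$ condition at every level $c\in\R$.

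For \eqref{aa1} I would combine $(H_2)$ with the behaviour of $G$ near the origin given by $(G_4)$ and $(G_1)$: for each $\eps>0$ there is $\delta_\eps>0$ with $|G(x,t)|\le\eps|t|^p$ when $|t|\le\delta_\eps$, while $|G(x,t)|\le C_\eps|t|^q$ otherwise; choosing $\eps$ small and then $r_0$ small, Poincar\'e and Sobolev inequalities together with $\int_\Omega A(x,u)|\nabla u|^p\,dx\ge \alpha_0\|u\|_W^p$ give $\J(u)\ge\varrho_0>0$ on the sphere $\{\|u\|_W=r_0\}$. For \eqref{aa2} I would pick $u_0\in X\setminus\{0\}$, say a smooth bump, and study $\J(tu_0)$ as $t\to+\infty$: for each fixed $t$, $(H_1)$ supplies an $L^\infty$-bound on $A(\cdot,tu_0)$, so the principal part grows at most like $t^p$, whereas $(G_2)$ forces $t^{-p}\int_\Omega G(x,tu_0)\,dx\to+\infty$; hence $\J(tu_0)\to-\infty$ and a suitable $e=t_*u_0$ with $\|e\|_W>r_0$ and $\J(e)<\varrho_0$ exists.

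The core of the argument, and where I expect the main obstacle, is the $(wCPS)_c$ condition. Given a $(CPS)_c$--sequence $(u_n)_n\subset X$, testing $d\J(u_n)$ against $u_n\in X$ and combining with $\J(u_n)\to c$ yields
\[
pc + o(1) \;=\; p\J(u_n) - \langle d\J(u_n),u_n\rangle \;=\; \int_\Omega \sigma(x,u_n)\,dx - \frac{1}{p}\int_\Omega A_t(x,u_n)u_n|\nabla u_n|^p\,dx.
\]
Without the Ambrosetti--Rabinowitz condition, $W^{1,p}$-boundedness of $(u_n)_n$ must be extracted from $(G_3)$ and $(H_3)$: arguing by contradiction, assume $\|u_n\|_W\to+\infty$ and normalize $v_n=u_n/\|u_n\|_W$; $(H_3)$ controls the $A_t$-contribution on $\{|u_n|\ge R_0\}$ in terms of $\int_\Omega A(x,u_n)|\nabla u_n|^p\,dx$, which is at least $\alpha_0\|u_n\|_W^p$ by $(H_2)$, while $(G_3)$ combined with $(G_2)$---the latter ruling out a nontrivial weak limit of $v_n$---produces the required contradiction, in the spirit of \cite{MP,mupli}. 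Once $W$-boundedness is secured, up to a subsequence $u_n\wk u$ in $W$, $u_n\to u$ in $L^\nu$ for every $\nu<p^*$ and a.e.\ in $\Omega$; strong $W^{1,p}$-convergence then follows as in \cite{CP1,CP2,CPS1} from the strict monotonicity of the $p$--Laplacian-type principal part together with the truncated local bounds of $(H_1)$. The most delicate step is showing $u\in L^\infty(\Omega)$, which I would handle by a Stampacchia/Moser iteration applied to $d\J(u)=0$ using $(H_2)$ and $(G_1)$. Passing to the limit in \eqref{diff} for arbitrary $v\in X$ finally yields $d\J(u)=0$ and $\J(u)=c$, completing $(wCPS)_c$; Theorem \ref{mountainpass} then produces a critical point $u^*\in X$ with $\J(u^*)\ge\varrho_0>\J(0)$, so $u^*\ne 0$ is a nontrivial bounded weak solution of \eqref{euler}.
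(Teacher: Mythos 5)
Your overall strategy (mountain pass geometry plus the $(wCPS)$ condition, concluded via Theorem \ref{mountainpass}) is the same as the paper's, and your treatment of \eqref{aa1} matches it. However, the heart of the matter, namely the $W^{1,p}_0$--boundedness of a $(CPS)_c$--sequence without the Ambrosetti--Rabinowitz condition, is not actually argued in your proposal, and the sketch you give points to the wrong hypotheses. After ruling out a nontrivial weak limit of $v_n=u_n/\|u_n\|_W$ via $(G_2)$ (which you do correctly), the quasi-monotonicity $(G_3)$ by itself gives no coercive information: the contradiction in the paper comes from the Jeanjean--Fang--Liu device of choosing $s_n\in[0,1]$ maximizing $s\mapsto \J(su_n)$, showing $\J(s_nu_n)\to+\infty$ (using $(H_2)$, the subcriticality $q<p^*$ and $v\equiv 0$), writing the Fermat identity $\langle d\J(s_nu_n),s_nu_n\rangle=0$, and then, crucially, using hypothesis $(H_4)$ to compare $A_t(x,s_nu_n)s_n^{p+1}u_n$ with $A_t(x,u_n)u_n$ so that $\J(s_nu_n)$ can be bounded above through $(G_3)$ and the boundedness of $p\J(u_n)-\langle d\J(u_n),u_n\rangle$ (which uses \eqref{altoAt} and \eqref{bassosigma}). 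Your proposal never invokes $(H_4)$ in this step and instead asserts that $(H_3)$ ``controls the $A_t$--contribution on $\{|u_n|\ge R_0\}$''; $(H_3)$ yields a lower bound of the form $-\frac1p A_t(x,t)t\le(1-\alpha_1)A(x,t)$ for large $|t|$, which does not produce the needed upper bound on $\J(s_nu_n)$, and indeed the paper does not use $(H_3)$ at all in the boundedness argument (it enters only in the later steps borrowed from \cite[Proposition 4.6]{CP2}). Appealing to ``the spirit of \cite{MP,mupli}'' does not close this gap, because there the principal part does not depend on $u$, and the whole novelty here is the transfer of the term $\frac1p\int_\Omega A_t(x,s_nu_n)s_n^{p+1}u_n|\nabla u_n|^p\,dx$, which is exactly what $(H_4)$ is designed for.

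Two smaller points. For \eqref{aa2}, your justification is insufficient as stated: $(H_1)$ only bounds $A(\cdot,tu_0)$ for each fixed $t$, and this bound may blow up as $t\to+\infty$ (e.g.\ $A(x,t)=1+t^2$ satisfies $(H_1)$), so it does not by itself give a principal part of order $t^p$; you need the uniform bound \eqref{altoA}, $A(x,t)\le\gamma_A$, which follows from $(H_4)$ together with $(H_1)$ as in Remark \ref{rema43} (the paper then uses \eqref{altomu} with $\mu$ large, along the eigenfunction $\varphi_1$). Finally, in the last step of $(wCPS)_c$ the order of operations matters: the $L^\infty$ bound on the limit $u$ and the strong $W^{1,p}_0$ convergence are obtained by testing with truncations along the sequence $(u_n)_n$ (as in \cite{CP2}); one cannot start a Stampacchia/Moser iteration from $d\J(u)=0$, since that identity is only available after this limiting procedure has been justified.
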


\begin{remark}\label{rema43}
Taking $s=0$ in $(H_4)$ we have that
\begin{equation}
\label{altoAt}
A_t(x,t) t \le 0\quad \hbox{a.e. in $\Omega$, for all $t \in \R$.}
\end{equation}
Hence, from $(H_0)$ and \eqref{altoAt} it follows that for a.e. $x \in \Omega$ the $C^1$ map
$A(x,\cdot)$ is increasing in $]-\infty,0]$, decreasing in $[0,+\infty[$, then
it attains its maximum in $t=0$. On the other hand, $(H_1)$ implies that 
$A(\cdot,0) \in L^\infty(\Omega)$; hence, $\gamma_A >0$
exists such that 
\begin{equation}
\label{altoA}
A(x,t) \le \gamma_A\qquad\hbox{a.e. in $\Omega$, for all $t \in \R$}.
\end{equation}
Such a requirement was already assumed in \cite{AB1} and \cite{Ca}.
\end{remark}

\begin{remark}
From $(G_0)$--$(G_2)$ and direct computations it follows that 
for all $\mu > 0$ a constant $L_\mu > 0$ exists, such that
\begin{equation}
\label{altomu}
G(x,t) \ge \mu |t|^p - L_\mu\qquad\hbox{a.e in $\Omega$, for all $t \in \R$.}
\end{equation}
We note that, for the arbitrariness of $\mu$, 
\eqref{altomu} and $(G_1)$ imply $p < q$.
\end{remark}

\begin{remark}
Condition $(G_3)$ implies that
$\sigma(x,0) = 0$ a.e in $\Omega$,
and then
\[
\sigma(x,t) \ge -\beta(x)\qquad\hbox{a.e in $\Omega$, for all $t \in \R$.}
\]
Hence,
\begin{equation}
\label{bassosigma}
\int_\Omega \sigma(x,u)dx\ \ge\ - |\beta|_1\qquad\hbox{for all $u \in X$.}
\end{equation}
\end{remark}


\section{Proof of the main result} \label{secproof}

The goal of this section is to prove the existence of a weak
bounded nontrivial solution of problem \eqref{euler},
so, by using the variational principle which follows 
from Proposition \ref{smooth1}, we want 
to apply Theorem \ref{mountainpass} to the functional $\J$ in \eqref{funct}
on the Banach space $X$ as in \eqref{space}.

\begin{proposition}\label{wCPS}
If $1 < p < q < p^*$ and $(H_0)$--$(H_4)$, $(G_0)$--$(G_3)$ hold, then 
functional $\J$ satisfies the weak Cerami--Palais--Smale condition in $X$
at each level $c \in \R$.
\end{proposition}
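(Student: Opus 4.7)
The plan is to fix $c\in\R$ and an arbitrary $(CPS)_c$--sequence $(u_n)_n\subset X$, and to proceed in three steps: (1) bound $(u_n)_n$ in $W=W^{1,p}_0(\Omega)$; (2) upgrade the resulting weak limit to a strong $W$--limit; (3) show that this limit lies in $X$ and is a critical point at level $c$. The essential novelty is step (1), where in the absence of the Ambrosetti--Rabinowitz condition I would use a Jeanjean--type monotonicity trick driven by $(H_4)$ and $(G_3)$; the remaining steps then follow standard quasilinear--operator technology in the spirit of \cite{CP1,CP2,CPS1}.

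For step (1) I would argue by contradiction, assuming $\|u_n\|_W\to+\infty$ and setting $v_n:=u_n/\|u_n\|_W$, so that, up to a subsequence, $v_n\rightharpoonup v$ in $W$ and $v_n\to v$ a.e. If $v\not\equiv 0$, then on $\{v\ne 0\}$ one has $|u_n|\to+\infty$, and $(G_2)$ together with \eqref{altomu} and Fatou give $\|u_n\|_W^{-p}\int_\Omega G(x,u_n)\,dx\to+\infty$; combined with the uniform bound $p^{-1}\|u_n\|_W^{-p}\int_\Omega A(x,u_n)|\nabla u_n|^p\,dx\le\gamma_A/p$ coming from \eqref{altoA}, dividing $\J(u_n)=c+o(1)$ by $\|u_n\|_W^p$ yields a contradiction. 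If instead $v\equiv 0$, I pick $t_n\in[0,1]$ with $\J(t_n u_n)=\max_{t\in[0,1]}\J(tu_n)$. For each $M>0$, setting $r=(pM/\alpha_0)^{1/p}$ and using $rv_n\to 0$ in $L^q$ (possible because $q<p^*$) together with $(G_1)$ and dominated convergence, one gets $\int_\Omega G(x,rv_n)\,dx\to 0$, hence $\J(t_nu_n)\ge\J(rv_n)\ge M-o(1)$, so $\J(t_nu_n)\to+\infty$. Since $\J(0)=0$ and $\J(u_n)\to c$, this forces $t_n\in(0,1)$ for large $n$, so the interior maximality $\langle d\J(t_nu_n),u_n\rangle=0$ holds, giving
\[
p\J(t_nu_n)=\int_\Omega\sigma(x,t_nu_n)\,dx-\frac1p\int_\Omega A_t(x,t_nu_n)\,t_nu_n\,|\nabla(t_nu_n)|^p\,dx.
\]
Applying $(H_4)$ pointwise with $s=t_n$, $t=u_n(x)$, and using \eqref{altoAt}, I obtain
\[
-\frac1p\int_\Omega A_t(x,t_nu_n)\,t_nu_n\,|\nabla(t_nu_n)|^p\,dx\le-\frac1p\int_\Omega A_t(x,u_n)\,u_n\,|\nabla u_n|^p\,dx,
\]
while $(G_3)$, used on $\{u_n\ge 0\}$ and $\{u_n\le 0\}$ separately (both with $|t_nu_n|\le|u_n|$ and matching signs), gives $\int_\Omega\sigma(x,t_nu_n)\,dx\le\int_\Omega\sigma(x,u_n)\,dx+|\beta|_1$. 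Adding these, the right--hand side equals $p\J(u_n)-\langle d\J(u_n),u_n\rangle+|\beta|_1\to pc+|\beta|_1$, contradicting $\J(t_nu_n)\to+\infty$.

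With boundedness in $W$ established, up to a subsequence $u_n\rightharpoonup u$ in $W$, $u_n\to u$ in $L^\nu$ for $\nu<p^*$, and $u_n\to u$ a.e. For step (2) I would test the CPS derivative bound against the truncations $T_k(u_n-u)\in X$, combine the $p$--Laplacian monotonicity $(|\xi|^{p-2}\xi-|\eta|^{p-2}\eta)\cdot(\xi-\eta)\ge 0$ with Fatou, and let $k\to+\infty$ to conclude $\nabla u_n\to\nabla u$ in $L^p(\Omega)$. For step (3), hypotheses $(H_1)$, $(H_3)$ and \eqref{altoA} together imply that $A$ and $A_t$ are globally bounded on $\Omega\times\R$ (since $(H_3)$ squeezes $A_t(x,t)t$ between $p(\alpha_1-1)\gamma_A$ and $0$ for $|t|\ge R_0$), so strong $W$--convergence plus dominated convergence let me pass to the limit in $\langle d\J(u_n),v\rangle=o(1)$ for every fixed $v\in X$, yielding $d\J(u)=0$ in $X'$ and, by the same token, $\J(u)=c$. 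A subcritical Moser iteration on the limit equation then delivers $u\in L^\infty(\Omega)$, hence $u\in X$. The real obstacle is step (1): the failure of Ambrosetti--Rabinowitz forces the simultaneous use of $(H_4)$ on the quasilinear side and $(G_3)$ on the source side, and correctly orchestrating the pointwise inequalities along $t_nu_n$ and $u_n$ (keeping their signs consistent) is the delicate accounting that carries the proof.
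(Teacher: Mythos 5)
Your Step (1) -- the heart of the matter -- is essentially the paper's own argument: the same normalization $v_n=u_n/\|u_n\|_W$, the same dichotomy on the weak limit $v$ (with \eqref{altomu}, Fatou and \eqref{altoA} when $v\not\equiv 0$), the same maximizers $t_n\in[0,1]$ with $\J(t_nu_n)\to+\infty$ when $v\equiv 0$, the same identity $p\J(t_nu_n)=\int_\Omega\sigma(x,t_nu_n)\,dx-\frac1p\int_\Omega A_t(x,t_nu_n)\,t_nu_n\,|\nabla(t_nu_n)|^p\,dx$ coming from $\langle d\J(t_nu_n),t_nu_n\rangle=0$, and the same interplay of $(H_4)$ (with $s=t_n$, $t=u_n(x)$) and $(G_3)$ against the bounded quantity $p\J(u_n)-\langle d\J(u_n),u_n\rangle$; your sign bookkeeping there is correct. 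The divergence is in the tail: the paper disposes of it by invoking Steps 2--5 of the proof of Proposition 4.6 in \cite{CP2} (which give $u\in L^\infty(\Omega)$, $u_n\to u$ strongly in $W^{1,p}_0(\Omega)$, $d\J(u)=0$ and $\J(u)=c$), whereas you sketch a self-contained route.

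That sketch has a genuine gap at the strong-convergence step. Testing $d\J(u_n)$ with $T_k(u_n-u)$ produces, besides the principal term to which the monotonicity of the $p$--Laplacian part applies, the quasilinear term $\frac1p\int_\Omega A_t(x,u_n)\,T_k(u_n-u)\,|\nabla u_n|^p\,dx$, and the global boundedness of $A_t$ (which you correctly extract from $(H_1)$, $(H_3)$, \eqref{altoAt} and \eqref{altoA}) does not make it vanish: $|\nabla u_n|^p$ is only bounded in $L^1(\Omega)$ and may concentrate on the small-measure sets where $|u_n-u|$ is not small, so $\int_\Omega |T_k(u_n-u)|\,|\nabla u_n|^p dx$ need not tend to $0$; the term has the same homogeneity as the principal part, hence it is not a lower-order perturbation, and no sign help is available because $T_k(u_n-u)$ carries the sign of $u_n-u$, not of $u_n$, so \eqref{altoAt} cannot be used. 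This is exactly the obstruction that the quoted machinery of \cite{CP2} (in the tradition of \cite{AB1,Ca}) is designed to overcome, through more carefully weighted test functions built from $u_n$ and a preliminary $L^\infty$ control, rather than the bare $T_k(u_n-u)$ trick. Your final idea (Stampacchia/Moser on the limit equation, using sign-matching truncations so that $(H_3)$ keeps the effective coefficient bounded below by $\alpha_1\alpha_0$) is viable once strong convergence and $d\J(u)=0$ are in hand, but as written the passage from the $T_k(u_n-u)$ test to $\nabla u_n\to\nabla u$ in $L^p(\Omega)$ does not go through and must be replaced by, or reduced to, the argument of \cite{CP2}.
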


\begin{proof}
Let $c \in \R$ be fixed and consider a sequence $(u_n)_n \subset X$
such that
\begin{equation}\label{c1}
\J(u_n) = c +\eps_n\quad \hbox{and}\quad \|d\J(u_n)\|_{X'}(1 + \|u_n\|_X)=\eps_n,
\end{equation}
where, for simplicity, throughout this proof, we use the notation $(\eps_n)_n$
for any infinitesimal sequence depending only on $(u_n)_n$. 

Firstly, we want to prove that
\begin{equation}\label{c0}
(u_n)_n\quad \hbox{is bounded in $W_0^{1,p}(\Omega)$}.
\end{equation} 
The ideas of the proof of \eqref{c0} are essentially 
contained in \cite[Lemma 2.2]{FL} and \cite[Proposition 3]{MP}, see also \cite[Lemma 2.5]{Liu},
but since some changes are required we include here all the details for the reader's convenience.

To this aim, arguing by contradiction, we assume that
\begin{equation}\label{c2}
\|u_n\|_W \to +\infty\qquad
\mbox{if $\ n\to+\infty$}
\end{equation}
and for any $n \in \N$ we define
\begin{equation}\label{c3}
v_n (x) = \frac{u_n(x)}{\|u_n\|_W} \quad
\mbox{for a.e. $x \in \Omega$,}
\end{equation}
so that $v_n\in X$.
Since $(v_n)_n$ is bounded in $W_0^{1,p}(\Omega)$, a function $v \in W_0^{1,p}(\Omega)$ exists
such that, up to subsequences, 
\begin{eqnarray}
&&v_n \rightharpoonup v\; \hbox{weakly in $W_0^{1,p}(\Omega)$,}
\nonumber\\
&&v_n \to v\; \hbox{strongly in $L^\nu(\Omega)$ for each $\nu \in [1,p^*[$,}
\label{c5}\\
&&v_n \to v\; \hbox{a.e. in $\Omega$.}
\label{c6}
\end{eqnarray}
Assume that $v \not\equiv 0$ in $\Omega$, i.e.,
\begin{equation}\label{c7}
\meas(\Omega \setminus \Omega_0) > 0,\qquad \hbox{with $\Omega_0 = \{x \in \Omega: v(x) = 0\}$.}
\end{equation}
From definition \eqref{c3}, the definition in \eqref{c7} and \eqref{c2}, \eqref{c6} it follows that
\[
|u_n(x)| = |v_n(x)|\ \|u_n\|_W \ \to +\infty\quad \hbox{for a.e. $x\in \Omega \setminus \Omega_0$;}
\]
hence, $(G_3)$ and \eqref{c6} imply that
\[ 
\frac{G(x,u_n(x))}{\|u_n\|_W^{p}}\ =\
\frac{G(x,u_n(x))}{|u_n(x)|^{p}}\ |v_n(x)|^{p} \ \to\ +\infty\quad
\hbox{for a.e. $x\in \Omega \setminus \Omega_0$.}
\]
Thus, from Fatou's Lemma and \eqref{c7} it follows that
\[
\int_{\Omega \setminus \Omega_0} \frac{G(x,u_n)}{\|u_n\|_W^{p}} dx\ \to\ +\infty,
\]
which implies that
\begin{equation}\label{c8}
\int_{\Omega} \frac{G(x,u_n)}{\|u_n\|_W^{p}} dx\ \to\ +\infty,
\end{equation}
as from \eqref{altomu} with, e.g., $\mu =1$, and \eqref{c2}
we obtain that 
\[
\int_{\Omega_0} \frac{G(x,u_n)}{\|u_n\|_W^{p}} dx\ \ge\ - \frac{L_1 \meas(\Omega_0)}{\|u_n\|_W^{p}} = \eps_n.
\]
But \eqref{funct}, \eqref{c1}, \eqref{c3} and \eqref{altoA} imply that
\[
\eps_n\ =\ -  \frac{\J(u_n)}{\|u_n\|_W^{p}}
\ =	\ - \frac{\gamma_A}p + \int_{\Omega} \frac{G(x,u_n)}{\|u_n\|_W^{p}} dx
\]
which contradicts \eqref{c8}. Hence, \eqref{c7} cannot hold and it has to be $v(x)=0$
a.e. in $\Omega$.

Now, from Proposition \ref{smooth1} we have that the map
\[
s \in [0,1] \mapsto \J(s u_n) \in \R
\]
is $C^1$ in its domain for each $n \in \N$; then
$s_n \in [0,1]$ exists such that
\begin{equation}\label{c9}
\J(s_n u_n)\ =\ \max_{s \in [0,1]} \J(s u_n).  
\end{equation}
If we fix any $\lambda >0$ and define 
\[
w_n(x)\ =\ (2\lambda)^{\frac1p} v_n(x)\quad \mbox{for a.e. $x \in \Omega$,}
\]
we have that $w_n\in X$; moreover, from \eqref{c5} and \eqref{c6}
 it follows that
\[
\begin{split}
&w_n \to 0\; \hbox{strongly in $L^\nu(\Omega)$ for each $\nu \in [1,p^*[$,}\\
&w_n \to 0\; \hbox{a.e. in $\Omega$.}
\end{split}
\]
Hence, from Remark \ref{suG} with $q < p^*$, by using the continuity of the 
Nemytskii operator, we obtain that 
\[
\int_{\Omega} G(x,w_n) dx\ \to\ 0;
\]
thus, $n_1= n_1(\lambda) \in \N$ exists, such that
\begin{equation}\label{c10}
\left|\int_{\Omega} G(x,w_n) dx\right|\ < \ \frac{\lambda \alpha_0}{p}\qquad
\hbox{for all $n \ge n_1$,}
\end{equation}
with $\alpha_0$ as in $(H_2)$. 
We note that \eqref{c2} implies 
\[
\frac{(2\lambda)^{\frac1p}}{\|u_n\|_W} \ \to\ 0,
\] 
so $n_2  = n_2(\lambda)\ge n_1$ exists, such that
\[
0\ <\ \frac{(2\lambda)^{\frac1p}}{\|u_n\|_W} \ < 1\qquad
\mbox{for all $n \ge n_2$};
\]
then from \eqref{c9}, $(H_2)$, \eqref{c10}
and direct computations it follows that 
\[
\J(s_n u_n) \ \ge\ \J(w_n)\ \ge
\frac{2 \lambda \alpha_0}{p}\ -\ \int_{\Omega} G(x,w_n) dx\ \ge\  \frac{\lambda \alpha_0}{p}\quad
\hbox{for all $n \ge n_2$.}
\]
Whence, as $\lambda >0$ is arbitrary, we obtain that
\begin{equation}\label{c17}
\J(s_n u_n) \ \to\ +\infty\qquad
\hbox{if $n \to +\infty$.}
\end{equation}
As $\J(0) = 0$, from \eqref{c1},  the limit \eqref{c17}
implies that $n_0 \in \N$ exists such that for all $n \ge n_0$
it has to be $s_n \in ]0,1[$ and then, 
from the Fermat Theorem, we have that
\[
\frac{d}{ds}\J(su_n)|_{s=s_n}\ = \ 0 \qquad \hbox{for all $n \ge n_0$,}
\]
which implies
\[
\begin{split}
0\ &=\ 
s_n \frac{d}{ds}\J(su_n)|_{s=s_n}\ = \ \langle d\J(s_n u_n),s_n u_n\rangle\\
&=\ \int_{\Omega} A(x,s_n u_n) |\nabla (s_n u_n)|^p dx 
+ \frac{1}p\ \int_{\Omega} A_t(x,s_nu_n) s_n^{p+1} u_n |\nabla u_n|^p dx 
- \int_{\Omega} g(x,s_nu_n)s_nu_n dx,
\end{split}
\] 
i.e.,
\begin{equation}\label{c11}
\begin{split}
\int_{\Omega} A(x,s_n u_n) |\nabla (s_n u_n)|^p dx\ =\
&-\ \frac{1}p\ \int_{\Omega} A_t(x,s_nu_n) s_n^{p+1} u_n |\nabla u_n|^p dx \\
&+ \int_{\Omega} g(x,s_nu_n)s_nu_n dx.
\end{split}
\end{equation}
Now, from one hand, we note that \eqref{funct}, \eqref{diff}, \eqref{c1} 
and \eqref{altoAt}, \eqref{bassosigma}, imply that
\[
\begin{split}
p c + \eps_n\ =\ & \ p \J(u_n) - \langle d\J(u_n),u_n\rangle \\
=\ &- \frac{1}p\ \int_{\Omega} A_t(x,u_n) u_n |\nabla u_n|^p dx + \int_{\Omega} \sigma(x,u_n) dx
\ \ge\ - |\beta|_1; 
\end{split}
\]
hence, 
\begin{equation}\label{c18}
\left(- \frac{1}p\ 
\int_{\Omega} A_t(x,u_n) u_n |\nabla u_n|^p dx + \int_{\Omega} \sigma(x,u_n) dx\right)_n\qquad
\mbox{is bounded;}
\end{equation}
while, on the other hand, $s_n \in [0,1]$ and $(G_3)$ give
\[
\sigma(x,s_nu_n(x)) \le \sigma(x,u_n(x)) + \beta(x)\quad\hbox{for a.e. $x\in \Omega$, 
all $n \in \N$;}
\]
thus,
\begin{equation}\label{c12}
\int_{\Omega} \sigma(x,s_nu_n) dx\ \le\ 
\int_{\Omega} \sigma(x,u_n) dx + |\beta|_1\quad\hbox{for all $n \in \N$.}
\end{equation}
Summing up, from definition \eqref{funct}, estimates 
\eqref{c11}, \eqref{c12}, assumption $(H_4)$ and \eqref{c18}, 
for all $n \ge n_0$ we obtain that
\[
\begin{split}
\J(s_nu_n) \ &=\ -\ \frac{1}{p^2}\ \int_{\Omega} A_t(x,s_nu_n) s_n^{p+1} u_n |\nabla u_n|^p dx 
+ \frac{1}{p} \int_{\Omega} \sigma(x,s_nu_n) dx\\
&\le\ -\ \frac{1}{p^2}\ \int_{\Omega} A_t(x,u_n) u_n |\nabla u_n|^p dx 
+ \frac{1}{p} \int_{\Omega} \sigma(x,u_n) dx + |\beta|_1 \ \le \ b
\end{split}
\]
for some $b>0$, in contradiction with \eqref{c17}. In conclusion, \eqref{c0} is true and $u \in W^{1,p}_0(\Omega)$
exists such that, up to subsequences, we have
\[
\begin{split}
&u_n \rightharpoonup u\; \hbox{weakly in $W^{1,p}_0(\Omega)$,}\\
&u_n \to u\; \hbox{strongly in $L^\nu(\Omega)$ for each $\nu \in [1,p^*[$,}\\
&u_n \to u\; \hbox{a.e. in $\Omega$.}
\end{split}
\]
Now, proceeding exactly as in Steps 2--5 of the proof of \cite[Proposition 4.6]{CP2}, 
it has to be that $u \in L^\infty(\Omega)$, too, and not only
$u_n \to u$ strongly in $W^{1,p}_0(\Omega)$ but also
$u$ is a critical point of $\J$ in $X$ such that $\J(u)=c$.
\end{proof}

\begin{proof}[Proof of Theorem $\ref{main}$.] 
From \eqref{alto3}, $(G_4)$ and direct computations
we get that for every $\eps>0$ 
a constant $C_\eps>0$ exists such that
\begin{equation}\label{suGg}
G(x,t)\leq \eps |t|^p +C_\eps |t|^q\quad \mbox{for a.e. $x\in \Omega$ and for all $t\in \R$.}
\end{equation}
Then, from \eqref{funct}, \eqref{suGg}, $(H_2)$, the Poincar\'e and the Sobolev inequalities it follows that
\[
\begin{split}
\J(u)\ &\geq\ \frac{1}{p}\int_\Omega A(x,u)|\nabla u|^p dx-\eps
 \int_\Omega|u|^pdx-C_\eps \int_\Omega |u|^q dx\\
&\geq\ \left(\frac{\alpha_0}{p} - \frac{\eps}{\lambda_1}\right) \|u\|_W^p 
- \tilde C_\eps \|u\|_W^q
\end{split}
\]
for some $\tilde C_\eps$ and for all $u\in X$, where
\[
\lambda_1\ =\ \inf_{\stackrel[u\neq0]{}{u\in W^{1,p}_0(\Omega)}}
\frac{\displaystyle\int_\Omega |\nabla u|^pdx}{\displaystyle\int_\Omega |u|^pdx}.
\]
Hence, if $\eps<\frac{\lambda_1\alpha_0}{p}$ and $\|u\|_W$ is small enough, 
we immediately deduce that 0 is a local minimum point for $\J$ and \eqref{aa1}
in Proposition \ref{mountainpass} holds for suitable $r_0$, $\varrho_0 >0$.

On the other hand, denoting by $\varphi_1$ the first positive eigenfunction of 
$-\Delta_p$ in $W^{1,p}_0(\Omega)$ with $|\varphi_1|_p=1$, from \eqref{altomu} 
with any fixed $\mu>0$, and from \eqref{funct} and \eqref{altoA} we get 
\[
\J(s\varphi_1)\ \leq\ s^p\left(\frac{\gamma_A}{p} \lambda_1
-\mu\right) + L_\mu|\Omega| \quad \hbox{for any $s>0$.}
\]
Hence, by choosing $\mu$ and $s$ sufficiently large, 
we obtain that $\J$ satisfies also the geometrical assumption \eqref{aa2} of 
Theorem \ref{mountainpass}; thus, 
by Proposition \ref{wCPS} we can apply Theorem \ref{mountainpass} 
and conclude with the existence of a nontrivial solution to problem \eqref{euler}.
\end{proof}

\section*{Acknowledgements}
A.M. Candela and G. Fragnelli are partially supported by  
MIUR--PRIN project  ``Qualitative and quantitative aspects of nonlinear PDEs'' (2017JPCAPN\underline{\ }005)
and by Fondi di Ricerca di Ateneo 2015/16. 
G. Fragnelli acknowledges the support of FFABR ``Fondo per il finanziamento
 delle attivit\`a base di ricerca'' 2017 and of the INdAM--GNAMPA Project 2019 
``Controllabilit\`a di PDE in modelli fisici e in scienze della vita''.
D. Mugnai is partially supported by MIUR--PRIN  ``Variational methods,
with applications to problems in mathematical physics and geometry'' (2015KB9WPT\underline{\ }009) 
and by the FFABR ``Fondo per il finanziamento delle attivit\`a base di ricerca'' 2017.

The authors wish to thank one of the two anonymous referees for bringing \cite{pellacci} to their attention.

\end{document}